%% file: main__3_.tex
\documentclass[11pt, hidelinks]{article}
\input{preamble}
\usepackage{optidef}
\usepackage{xcolor}

\title{Exact Theory and Optimal Stepsizes for Stochastic Gradient Descent in Smooth Convex Minimization}
\author{
    Benjamin Grimmer\footnote{Johns Hopkins University, Department of Applied Mathematics and Statistics, \texttt{grimmer@jhu.edu}}
    \and
    Qining Shen\footnote{Johns Hopkins University, Department of Applied Mathematics and Statistics, \texttt{qshen18@jh.edu}}
}
\date{}

\begin{document}

\maketitle

\begin{abstract}
    This work considers the design of stepsize schedules for stochastic gradient descent in smooth convex optimization. Throughout, we assume that gradient noise is conditionally zero-mean with bounded variance. Among problems with dimension $d\geq N+1$, we provide an exact worst-case analysis of the expected suboptimality of stochastic gradient descent's final iterate. Our analysis applies to any (potentially nonconstant) schedule of $N$ stepsizes in $[0,1.5]^N$. Here, exact means that we provide a matching objective function and noise distributions where our guarantee is attained. Whenever the variance bound is at least $\Omega(1/\sqrt{N})$, our theory provides the minimax optimal stepsize schedule for stochastic gradient descent and proves its uniqueness. These optimal stepsizes monotonically decrease, ruling out any accelerations in expected final suboptimality from fixed fractal schedules observed in the deterministic case.
\end{abstract}

\section{Introduction}
We consider stochastic smooth convex minimization problems of the form
\begin{equation}
    f_\star = \min_{x\in\mathbb{R}^d} f(x)
\end{equation}
in dimension $d$. Throughout, we assume the objective is $L$-smooth (i.e., has an $L$-Lipschitz gradient), that this minimum value is attained at some $x_\star$, and that an initialization $x_0$ is provided with bounded distance $\|x_0-x_\star\|\leq D$.

For any such problem instance, we study Stochastic Gradient Descent (SGD) with a fixed iteration budget of $N$ steps. At each iteration $n=1,\dots,N$, the algorithm is provided a stochastic estimate of the objective's gradient, denoted
\begin{equation*}
    \widetilde g_{n-1} = \nabla f(x_{n-1}) + \epsilon_{n-1}
\end{equation*}
with $\epsilon_{n-1}$ sampled from a conditional distribution $P_{n-1}(\cdot \mid \mathcal{F}_{n-1})$, conditioned on the history $\mathcal{F}_k=\sigma(\epsilon_0,\dots,\epsilon_{k-1})$.  We assume this noise is conditionally zero-mean and has variance at most $\sigma^2$, conditioned on the history. That is, almost surely,
\begin{equation*}
    \mathbb{E}[\epsilon_{n-1} \mid \mathcal{F}_{n-1}]=0 \quad \text{and} \quad \mathbb{E}[\|\epsilon_{n-1}\|^2 \mid \mathcal{F}_{n-1} ]\leq \sigma^2.
\end{equation*}
SGD then uses this stochastic oracle and a fixed sequence of $N$ stepsizes $h_{0},\dots,h_{N-1} \in\mathbb{R}$ to iterate for $n=1,\dots,N$ with
\begin{equation} \label{eq:SGD}
    x_n = x_{n-1} - \frac{h_{n-1}}{L}\widetilde g_{n-1}.
\end{equation}
Afterwards, the algorithm outputs its final iterate $x_N$. Note that many variations of this method have been proposed, including various momentum, averaging, and extrapolation dynamics. Here we study the core method in this simplest, fundamental form.

A problem instance in dimension $d$ is defined by the initialization $x_0\in\mathbb{R}^d$, objective function $f\colon \mathbb{R}^d\rightarrow \mathbb{R}$ with minimizer $x_\star\in\mathbb{R}^d$, and sequence of distributions from which noise is drawn, $P_0,\dots,P_{N-1}$. We denote these collectively by some
\begin{equation*}
    \mathtt{p} = (x_0,f, P_0,\dots,P_{N-1}).
\end{equation*}
We denote the set of all problem instances by $\mathtt{P}(L,D,\sigma^2)$, constrained by $L$-smoothness, $D$ initial distance, and conditional $\sigma^2$ variance.

An $N$-step stochastic gradient descent method is defined once one specifies its vector of stepsizes $h\in\mathbb{R}^N$. 
For a fixed vector of stepsizes, we measure the performance of SGD by the expected final objective gap, $\mathbb{E}_{\epsilon} [f(x_N)] - f_\star$. Notice the only source of randomness considered is the noise at each iteration, so this expectation is only over all $\epsilon_i$.

Our main goal is to characterize the exact worst-case last-iterate performance of SGD, for fixed vectors of stepsizes $h\in\mathbb{R}^N$, over this problem class, for fixed $L,D,\sigma^2$. This is known as the algorithm's ``Performance Estimation Problem'' (PEP). Formally, we denote this by
\begin{equation}\label{eq:PEP}
    \mathtt{PEP}(h,L,D,\sigma^2) = \max_{\mathtt{p} \in \mathtt{P}(L,D,\sigma^2)} \left\{\mathbb{E}_{\epsilon} \left[f(x_N)\right] - f_\star \right\}.
\end{equation}
Here we consider a fixed iteration budget $N$ and problem dimension $d$, which are suppressed in the above notation.

PEPs were first proposed and exactly analyzed by the works~\cite{drori2014performance,taylor2017smooth,taylor2017composite} in the context of deterministic optimization. For sufficiently structured deterministic problem classes (e.g., $L$-smooth convex minimization), provided $d$ is large enough, the related PEP can often be described as a semidefinite program (SDP). From this, computer-assisted design techniques can then be applied.

The discovery of our results critically relied on such numerical and analytic insights to develop intuitions and formulate conjectures. In particular, the stochastic PEP approach of~\cite{rubbens2026computeraidedanalysesstochasticfirstorder} applies to the considered setting and informed our preliminary study of SGD. We present the resulting arguments directly, so our main results can be read without familiarity with PEP formulations.

\paragraph{Our Contributions.} 
Our main result exactly solves the above PEP for SGD with any schedule of stepsizes $h\in\mathbb{R}^N$ in the short step regime $[0,1.5]^N$, provided $d\geq N+1$. In short, we find that
\begin{equation} \label{eq:pep-value}
    \mathtt{PEP}(h,L,D,\sigma^2) = \frac{LD^2}{
        2\left(1+2R_0\right)
        }
        +
        \frac{\sigma^2}{2L}
        \sum_{j=0}^{N-1}
        \frac{h_j^2}{
        1+2R_{j+1}
        }
\end{equation}
where $R_n = \sum_{i=n}^{N-1} h_i$ denotes the total remaining stepsize length at iteration $n$ with $R_N=0$. Specializing to $\sigma=0$, the first term above provides the exact worst-case of deterministic gradient descent in the short step regime---matching the numerically supported conjecture of~\cite[Section 4.4.1]{Daccache2019}. For example,~\eqref{eq:pep-value} recovers the deterministic constant stepsize $h\in [0,1]$ analysis of~\cite{drori2014performance}. The second term above then captures the effect of the noise at each iteration $j$ on the final iterate.

In particular, our convergence upper bound applies for problems in any dimension, only requiring the short step assumption, and our convergence lower bound applies for any nonnegative stepsizes, only requiring the large dimension assumption. When both hold, the two bounds agree, providing the above exact worst-case characterization. The following theorem formalizes this.

\begin{theorem} \label{thm:main-rate}
    For any iteration budget $N\geq 1$, stepsizes $h\in [0,1.5]^N$, and problem parameters $L,D>0$, $\sigma^2\geq 0$, the SGD method~\eqref{eq:SGD} applied to any problem instance $(x_0,f,P_0,\dots,P_{N-1})\in\mathtt{P}(L,D,\sigma^2)$ has
    \begin{equation*}
        \mathbb{E}_{\epsilon} \left[f(x_N)\right] - f_\star \leq \frac{LD^2}{
        2\left(1+2R_0\right)
        }
        +
        \frac{\sigma^2}{2L}
        \sum_{j=0}^{N-1}
        \frac{h_j^2}{
        1+2R_{j+1}
        }.
    \end{equation*}
    Moreover, for any stepsizes $h\in \mathbb{R}_+^N$, if $d\geq N+1$, there exists a problem instance with
    \begin{equation*}
        \mathbb{E}_{\epsilon} \left[f(x_N)\right] - f_\star = \frac{LD^2}{
        2\left(1+2R_0\right)
        }
        +
        \frac{\sigma^2}{2L}
        \sum_{j=0}^{N-1}
        \frac{h_j^2}{
        1+2R_{j+1}
        }.
    \end{equation*}
\end{theorem}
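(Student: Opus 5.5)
The upper bound will come from a potential (Lyapunov) argument built from the deterministic short-step analysis. Normalize $L=1$. Define the potential
\[
\Phi_n = \tfrac{1}{2}\|x_n-x_\star\|^2 + (1+2R_n)\big(f(x_n)-f_\star\big)\cdot c_n ,
\]
and, more precisely, aim to show for each $n$ that
\[
\mathbb{E}\big[(1+2R_{n+1})(f(x_{n+1})-f_\star) + \tfrac12\|x_{n+1}-x_\star\|^2\big] \le \mathbb{E}\big[(1+2R_n)(f(x_n)-f_\star) + \tfrac12\|x_n-x_\star\|^2\big] + \tfrac{\sigma^2}{2}h_n^2 .
\]
Telescoping would give
\[
(1+2R_N)\,\mathbb{E}[f(x_N)-f_\star] \le \tfrac12 D^2 + (1+2R_0)(f(x_0)-f_\star)+\dots
\]
This form does not produce the weights $1/(1+2R_{j+1})$, however. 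The correct route is to divide by the current weight. Set $\tau_n = 1+2R_n$, which decreases in $n$ with $\tau_N=1$, and look for a potential of the form $\mathbb{E}[f(x_n)-f_\star] + \tfrac{1}{2\tau_n}\mathbb{E}\|x_n - x_\star - \text{(correction)}\|^2$. The deterministic proof of the conjectured $\frac{1}{2(1+2R_0)}$ rate for short steps (Daccache) is known to be subtle; the standard constant-step proof of Drori–Teboulle aggregates interpolation inequalities between all pairs $(x_i,x_j)$ and $x_\star$ with carefully chosen multipliers $\lambda_{ij}$. I therefore plan to follow the PEP dual-certificate route. Write $\mathbb{E}f(x_N)-f_\star - \text{RHS}$ as a nonpositive combination of the $L$-smooth convex interpolation inequalities
\[
f_i \ge f_j + \langle g_j, x_i-x_j\rangle + \tfrac12\|g_i-g_j\|^2
\]
applied to the realized (random) iterates, take expectations, and use $\mathbb{E}[\langle \epsilon_k, \cdot\rangle]=0$ for $\mathcal F_k$-measurable vectors together with $\mathbb{E}\|\epsilon_k\|^2\le\sigma^2$.

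\textbf{Step 1 (noise bookkeeping).} Expanding $x_n = x_0 - \sum_{k<n} h_k(g_k+\epsilon_k)$, every cross term $\langle \epsilon_k, \cdot\rangle$ whose other factor is $\mathcal F_k$-measurable vanishes in expectation. The only surviving noise contributions are $\|\epsilon_k\|^2$ terms and products $\langle \epsilon_k, g_j\rangle$ with $j>k$. The multipliers must be chosen so that each $\epsilon_k$ contributes exactly coefficient $h_k^2/(2\tau_{k+1})$ on $\|\epsilon_k\|^2$, with the $\langle\epsilon_k,g_j\rangle$, $j>k$, terms absorbed into a completed square. Then $\epsilon_k$ behaves like a perturbation of the starting point of the remaining $N-k-1$ step run.

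\textbf{Step 2 (certificate via induction on the tail).} This suggests proving the bound by backward induction. Let $V_n(x) $ denote the worst-case value from iterate $n$ with remaining steps $h_n,\dots,h_{N-1}$. One natural inductive statement is
\[
\mathbb{E}[f(x_N)]-f_\star \le \mathbb{E}\Big[\tfrac{\|x_n-x_\star\|^2}{2\tau_n}\Big] + \tfrac{\sigma^2}{2}\sum_{j\ge n}\tfrac{h_j^2}{\tau_{j+1}},
\]
but the induction does not close with only distances, since the deterministic certificate uses all past gradients. I would instead state the deterministic inequality as a quadratic form identity: the Daccache-type certificate gives
\[
f(x_N)-f_\star - \tfrac{\|x_0-x_\star\|^2}{2\tau_0} = -\sum \lambda_{ij}(\text{interp. gaps}) - \|\text{linear comb.}\|^2\text{-type PSD term},
\]
valid for arbitrary vectors. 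I would then substitute the stochastic steps and verify that the extra terms collapse to $\sum_j h_j^2\|\epsilon_j\|^2/(2\tau_{j+1})$ plus martingale-zero terms. \textbf{This is the main obstacle}: finding explicit multipliers valid on $[0,1.5]^N$. I would first try the chain-plus-star certificate, using multipliers only on consecutive pairs $(x_i,x_{i+1})$ and on pairs $(x_\star,x_i)$, with $\lambda_{\star,i}$ proportional to $h_i/(\tau_i\tau_{i+1})$-type differences, and check that the PSD residual is nonnegative exactly when $h_i\le 1.5$.

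\textbf{Step 3 (lower bound).} Use the Huber-type one-dimensional worst case along $x_0$ direction, $f(x)=\frac{D}{\tau_0}\big(|x^{(1)}| - \frac{D}{2\tau_0}\big)$ for large $|x^{(1)}|$ and quadratic near 0. Add noise on $N$ orthogonal coordinates, $\epsilon_j=\pm\sigma e_{j+2}$. On coordinate $j+2$ put a Huber function whose linear part has slope chosen so that the displacement $h_j\sigma$ is followed by $R_{j+1}$ of linear-region steps, yielding contribution $\frac{\sigma^2h_j^2}{2\tau_{j+1}}$. Since the coordinates are separable, the contributions add, which requires $d\ge N+1$. Checking that all iterates stay in the linear regions, which needs $h\ge0$ only, is a direct computation.
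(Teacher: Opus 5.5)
Your lower-bound half (Step 3) is correct and is essentially the paper's construction. It uses Huber components with $a_0=D/(1+2R_0)$ along $x_0$ and $a_{j+1}=\sigma h_j/(L(1+2R_{j+1}))$ along the direction of $\epsilon_j$, and each coordinate trajectory stays on one linear facet when $h\ge 0$.

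The upper-bound half, however, has a genuine gap: it is never actually proved.
\begin{itemize}
\item Your Step 2 invokes a ``Daccache-type certificate'' as if it were available. For nonconstant schedules in $[0,1.5]^N$ that rate was only a numerically supported conjecture, so exhibiting such a certificate is the content of the theorem even at $\sigma=0$.
\item You then only propose to \emph{try} a consecutive-pairs-plus-star certificate, without giving multipliers or checking the residual. The certificate the paper uses is not of that form: it places positive weight $\lambda_{i,j}=\alpha_i\alpha_j\prod_{m=i+1}^{j-1}(1-\alpha_m)$, with $\alpha_n=h_n/(2R_n)$, on \emph{every} pair $i<j$.
\item Substituting noisy iterates into a deterministic identity does not automatically leave only $\|\epsilon_k\|^2$ terms and martingale terms. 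The noise $\epsilon_k$ enters every later $x_j$, and the cross terms $\langle\epsilon_k,g_j\rangle$, $j>k$, are not zero-mean. On the Huber instance each equals $-\sigma^2h_k/(1+2R_{k+1})$ deterministically. Whether they cancel depends entirely on how the certificate is organized.
\end{itemize}

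The missing idea is a sequential potential in which the noise enters only through a ``state'' vector. The paper sets $y_0=x_\star$, $y_{n+1}=(1-\alpha_n)y_n+\alpha_n(x_n-g_n/L)$, and tracks
\[
\mathcal H_n=\frac{LD^2}{2(1+2R_0)}+\frac{\sigma^2}{2L}\sum_{j<n}\frac{h_j^2}{1+2R_{j+1}}-\sum_{i<n}\alpha_i\prod_{m=i+1}^{n-1}(1-\alpha_m)\,\mathbb E\Big[f_i-f_\star-\tfrac{1}{2L}\|g_i\|^2\Big]-\frac{L\,\mathbb E\|x_n-y_n\|^2}{2(1+2R_n)}.
\]
Since $x_n-y_n$ is an $\mathcal F_{n-1}$-measurable vector minus $\frac{h_{n-1}}{L}\epsilon_{n-1}$, the new noise contributes exactly $\frac{h_{n-1}^2}{2L(1+2R_n)}\mathbb E\|\epsilon_{n-1}\|^2$. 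All of its later influence is carried inside $x_n-y_n$ rather than requiring separate cancellation.

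One then verifies that $\mathcal H_n-\mathcal H_{n-1}$ equals a nonnegative combination of three kinds of terms:
\begin{itemize}
\item the expected interpolation gaps $\mathcal Q_{\star,n-1}$ and $\mathcal Q_{i,n-1}$ ($i<n-1$);
\item the slack $\frac{h_{n-1}^2}{2L(1+2R_n)}(\sigma^2-\mathbb E\|\epsilon_{n-1}\|^2)$;
\item $\mathbb E\|x_{n-1}-y_{n-1}-\frac{1+2R_{n-1}}{L}g_{n-1}\|^2$, with coefficient proportional to $\alpha_{n-1}(2-h_{n-1}-\alpha_{n-1})$.
\end{itemize}
The last coefficient is exactly where $\alpha_{n-1}\le\frac12$ and $h_{n-1}\le\frac32$ enter. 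A terminal identity at $n=N$ (with $\alpha_N=1$) turns $\mathcal H_N\ge0$ into the claimed bound.

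Your instinct to use a potential of the form $\mathbb E[f(x_n)-f_\star]+\frac{1}{2\tau_n}\mathbb E\|x_n-x_\star-\text{correction}\|^2$ points in this direction. Indeed, the paper's $\mathcal H_n\ge0$ bounds $\mathbb E f_{2R_n/L}(x_n)-f_\star$ for the Moreau envelope. But without the explicit correction $y_n$ and the verified one-step identity, the upper bound is not established.
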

Note that the high-dimensionality assumption $d\geq N+1$ is standard in the dimension-independent analysis of first-order methods~\cite{nemirovsky1983problem}. As mentioned above, this assumption is also common within the PEP literature.

Section~\ref{sec:UB} proves the claimed upper bound on final expected suboptimality via a short inductive argument. There we provide an interpretation of this tight proof as iteratively building expected suboptimality guarantees on a sequence of Moreau envelopes.
Section~\ref{sec:LB} proves the matching lower bound in our main theorem by constructing an attaining worst-case objective and collection of distributions. In fact, there we show a stronger result: every realization of the noise vectors $\epsilon_0,\dots,\epsilon_{N-1}$ on our constructed worst-case problem instance attains the above equality.

\subsection{A Corollary: The Exactly Optimal Stepsize Schedule}
We next optimize this bound over fixed schedules of stepsizes when $\sigma^2>0$. Consider
\begin{equation} \label{eq:minimax-design}
    r_\star(L,D,\sigma^2) = \min_{h\in\mathbb{R}^N} \max_{\mathtt{p} \in \mathtt{P}(L,D,\sigma^2)} \left\{\mathbb{E}_{\epsilon} \left[f(x_N)\right] - f_\star \right\}
\end{equation}
where we continue to suppress the dependence on the fixed values $N,d$. The above value $r_\star(L,D,\sigma^2)$ denotes the best-possible final guarantee any fixed schedule can provide. Stepsize schedules that attain the above minimum are called minimax optimal. 

The nonnegative restriction of our main theorem's lower bounding statement is for ease in stating the result. If $d\geq N+1$, Section~\ref{sec:negative-steps} shows that the worst-case performance of any schedule $h$ (allowing negative stepsizes) is at least the right-hand-side of~\eqref{eq:pep-value} evaluated at $h^+=(\max\{h_i,0\})_{i=0}^{N-1}$. Hence, one has
\begin{equation} \label{eq:minimized-LB}
    r_\star(L,D,\sigma^2) \geq \min_{h\in\mathbb{R}^N_+} \left\{\frac{LD^2}{
        2\left(1+2R_0\right)
        }
        +
        \frac{\sigma^2}{2L}
        \sum_{j=0}^{N-1}
        \frac{h_j^2}{
        1+2R_{j+1}
        }\right\}.
\end{equation}
This minimization problem can be solved by observing that the stated objective is strictly convex in $h$ and possesses a unique stationary point in the interior of $\mathbb{R}^N_+$. In particular, the minimizing stepsize schedule is given by a system of equations setting the gradient equal to zero,
\begin{equation}\label{eq:stationarity-stepsizes}
    \nabla \left(\frac{LD^2}{
        2\left(1+2R_0\right)
        }
        +
        \frac{\sigma^2}{2L}
        \sum_{j=0}^{N-1}
        \frac{h_j^2}{
        1+2R_{j+1}
        }\right) (h^\star)= 0.
\end{equation}

To solve this system, consider the recurrence $c_{i+1}=c_i+c_i^2$ for $i=0,\dots,N-1$, setting $c_0$ as the unique positive solution to the equation
\begin{equation*}
c_0\prod_{i=0}^{N-1} \left(1+2c_i\right)^2 = \frac{L^2D^2}{\sigma^2}.
\end{equation*}
We claim the following stepsize schedule solves the necessary and sufficient optimality condition~\eqref{eq:stationarity-stepsizes}
\begin{align}
h_i^\star = c_i \prod_{k=i+1}^{N-1}\left(1+2c_k\right), \qquad i=0,\dots,N-1. \label{eq:optimal-steps}
\end{align}
Appendix~\ref{app:optimal-stepsizes-verification} verifies this zero gradient condition.
Plugging these optimized stepsizes in provides a lower bound on the performance of SGD with any stepsize schedule. Namely, for any $L,D,\sigma^2>0$,
\begin{equation} \label{eq:general-LB}
    r_\star(L,D,\sigma^2) \geq  \sigma D \sqrt{c_0} - \frac{\sigma^2}{2L}c_N.
\end{equation}

Often these optimized stepsizes lie in the range $[0,1.5]^N$ where our matching upper bound theory applies. In particular, whenever the variance bound is not too small, $\sigma^2 \geq \frac{2}{3\sqrt{3}}\frac{L^2D^2}{\sqrt{N}}$, one has that $h^\star \in [0,1.5]^N$. Appendix~\ref{app:optimal-stepsizes-range} verifies this claimed bound.

Thus, given the preceding sufficient variance condition, the above minimization of our lower bound is attained at a short step schedule where the lower bound is tight. Hence, in this regime, $h^\star$ is the minimax optimal schedule. That is, no alternative fixed schedule of stepsizes $h$ can provide a stronger worst-case guarantee. The fact that $h^\star$ is the unique solution to~\eqref{eq:stationarity-stepsizes} and is strictly positive establishes it as the unique minimax optimal stepsize schedule.

The following result formally states this corollary.
\begin{corollary}
    Provided $\sigma^2 \geq \frac{2}{3\sqrt{3}}\frac{L^2D^2}{\sqrt{N}}$ and $d\geq N+1$, the stepsize schedule $h^\star$ in~\eqref{eq:optimal-steps} is exactly minimax optimal, i.e., solves~\eqref{eq:minimax-design}, and uniquely so. In particular, SGD with stepsizes $h^\star$ has
    \begin{equation} \label{eq:optimal-rate}
        \mathbb{E}_{\epsilon} \left[f(x_N)\right] - f_\star \leq r_\star(L,D,\sigma^2) = \sigma D \sqrt{c_0} - \frac{\sigma^2}{2L}c_N = \frac{\sigma D}{\sqrt{N}} - \frac{\sigma^{3/2}\sqrt{D}}{\sqrt{L}N^{3/4}} + o(1/N^{3/4}).
    \end{equation}
    No other fixed schedule of stepsizes has the same or smaller worst-case final objective gap.
\end{corollary}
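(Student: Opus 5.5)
We assemble the corollary from Theorem~\ref{thm:main-rate} together with the facts established in the preceding discussion. These are: the negative-stepsize reduction of Section~\ref{sec:negative-steps}, the stationarity verification of Appendix~\ref{app:optimal-stepsizes-verification}, and the range bound of Appendix~\ref{app:optimal-stepsizes-range}. Throughout, write $\Phi(h)$ for the right-hand side of~\eqref{eq:pep-value}.

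\textbf{Lower bound for every schedule.} By Section~\ref{sec:negative-steps}, any $h\in\mathbb{R}^N$ has worst-case value at least $\Phi(h^+)$ when $d\geq N+1$. This gives~\eqref{eq:minimized-LB}. Next we argue that $\Phi$ is strictly convex on $\mathbb{R}^N_+$. The first term $LD^2/(2(1+2R_0))$ is a convex function of the linear form $R_0$. Each summand $h_j^2/(1+2R_{j+1})$ is a perspective-type quadratic-over-linear function, so it is jointly convex in $(h_j,R_{j+1})$ on $1+2R_{j+1}>0$. Strictness follows because the $j$-th summand is strictly convex in $h_j$ along directions that keep $R_{j+1}$ fixed, and a triangular argument from $j=N-1$ downward covers every direction. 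Appendix~\ref{app:optimal-stepsizes-verification} shows $h^\star$ is a strictly positive zero of $\nabla\Phi$. Hence $h^\star$ is the unique minimizer of $\Phi$ over $\mathbb{R}^N_+$, and $r_\star\geq\Phi(h^\star)$.

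\textbf{Evaluating $\Phi(h^\star)$.} We set $P_i=\prod_{k=i}^{N-1}(1+2c_k)$. The recurrence $c_{i+1}=c_i+c_i^2$ gives $(1+2c_i)^2=1+4c_{i+1}$. We then show by backward induction that $1+2R_i=P_i^2$. The base case $R_N=0$ is immediate. For the inductive step, $1+2R_i=1+2R_{i+1}+2h_i=P_{i+1}^2+2c_iP_{i+1}$. This is a telescoping computation, and I expect it to reduce to the identity above after using the definition of $c_0$. Substituting into $\Phi$, the sum telescopes via $c_j^2=c_{j+1}-c_j$ into $\frac{\sigma^2}{2L}(\text{const}-c_N)$. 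The defining equation $c_0P_0^2=L^2D^2/\sigma^2$ then balances the deterministic term and yields $\sigma D\sqrt{c_0}-\frac{\sigma^2}{2L}c_N$. The asymptotic expansion comes from the standard analysis of $c_{i+1}=c_i+c_i^2$, namely $1/c_{i+1}\approx 1/c_i-1$. Solving the defining equation for $c_0$ to second order in $N^{-1/2}$ produces the $N^{-3/4}$ correction.

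\textbf{Upper bound, optimality and uniqueness.} Under $\sigma^2\geq\frac{2}{3\sqrt3}L^2D^2/\sqrt N$, Appendix~\ref{app:optimal-stepsizes-range} gives $h^\star\in[0,1.5]^N$. The upper bound of Theorem~\ref{thm:main-rate} therefore shows that SGD with $h^\star$ has worst case at most $\Phi(h^\star)\leq r_\star$, so equality holds. For uniqueness, take any $h\neq h^\star$. If $h\in\mathbb{R}^N_+$, its worst case is at least $\Phi(h)>\Phi(h^\star)$ by strict convexity. If $h$ has a negative entry, then $h^+\neq h$. Its worst case is at least $\Phi(h^+)$, and we need strict inequality. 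If $h^+\neq h^\star$, strict convexity gives $\Phi(h^+)>\Phi(h^\star)$. If $h^+=h^\star$, this is impossible because $h^\star$ is strictly positive, so $h^+$ would have no zero entries and would equal $h$.

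The \textbf{main obstacle} is the closed-form evaluation: proving $1+2R_i=P_i^2$ and the telescoping of the noise sum. If the product bookkeeping resists, I would instead verify directly that the value equals $\frac{LD^2}{2P_0^2}+\frac{\sigma^2}{2L}\sum_j c_j^2\frac{P_{j+1}^2}{P_{j+1}^2}$, and telescope $\sum_j c_j^2=c_N-c_0$. This route would also show that the sign of the $c_N$ term must be reconciled with the constants in $P_0$.
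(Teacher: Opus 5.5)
\textbf{Gap: the closed-form value $\Phi(h^\star)$ is computed from a false identity.} The rest of your structure matches the paper's argument and is sound: the negative-step reduction, strict convexity with the triangular argument, the imported stationarity check, the range bound feeding the upper half of Theorem~\ref{thm:main-rate}, and the uniqueness case split on $h^+$. The problem is the evaluation of $\Phi(h^\star)$, which is part of the statement.

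Your identity $1+2R_i=P_i^2$ fails already at $i=N-1$. There it reads $1+2c_{N-1}=(1+2c_{N-1})^2$, which is impossible since $c_{N-1}>0$. No appeal to the defining equation of $c_0$ can repair a relation that involves only $c_{N-1}$. The induction you began in fact proves $1+2R_i=P_i$:
\[
1+2R_i=P_{i+1}+2c_iP_{i+1}=(1+2c_i)P_{i+1}.
\]
Your version is also inconsistent with the stationarity check you cite from Appendix~\ref{app:optimal-stepsizes-verification}. That check uses $h_i^\star/(1+2R_{i+1})=c_i$ and $c_0(1+2R_0)^2=L^2D^2/\sigma^2$, and both rely on $1+2R_i=P_i$. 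Carried through, your fallback computation gives a deterministic term $\frac{LD^2}{2P_0^2}=\frac{\sigma^2}{2L}c_0$ and a noise term $\frac{\sigma^2}{2L}(c_N-c_0)$. That totals $\Phi(h^\star)=\frac{\sigma^2}{2L}c_N$, not the claimed value. This is the sign discrepancy you flagged, so as written the proposal does not establish $r_\star=\sigma D\sqrt{c_0}-\frac{\sigma^2}{2L}c_N$.

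\textbf{Fix.} With $1+2R_{j+1}=P_{j+1}$, the noise summands are $(h_j^\star)^2/(1+2R_{j+1})=c_j^2P_{j+1}$, not $c_j^2$. They telescope through
\[
c_j^2P_{j+1}=c_jP_j-c_{j+1}P_{j+1},
\]
which uses $c_j(1+2c_j)-c_{j+1}=c_j^2$. The sum is therefore $c_0P_0-c_N$, since $P_N=1$. The defining equation $c_0P_0^2=L^2D^2/\sigma^2$ gives $\frac{\sigma^2}{2L}c_0P_0=\frac{LD^2}{2P_0}$, so
\[
\Phi(h^\star)=\frac{LD^2}{P_0}-\frac{\sigma^2}{2L}c_N=\sigma D\sqrt{c_0}-\frac{\sigma^2}{2L}c_N,
\]
using $1/P_0=\sigma\sqrt{c_0}/(LD)$. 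This is exactly the paper's computation. Your asymptotic sketch, based on $1/c_{i+1}=1/c_i-1/(1+c_i)$, should then be applied to this corrected expression rather than to the one your identity produced.
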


Figure~\ref{fig:stepsizes} shows the optimal stepsize schedules for $N=10$ and $N=100$ steps with variance bounds $\sigma^2=1/8,1/2,2,8$, fixing $L=D=1$. One can see the optimal stepsizes decay almost linearly. Indeed, one can approximate the optimal schedule, given $N,L,D,\sigma^2$, by the simpler formula
\begin{equation} \label{eq:simple-asym-optimal-formula}
    \tilde{h}_i = \sqrt{\frac{LD}{\sigma} \frac{1}{N^{3/2}}} + \frac{LD}{\sigma} \frac{N-1-i}{N^{3/2}}.
\end{equation}
Plugging these simplified stepsizes into Theorem~\ref{thm:main-rate} gives a sharp convergence rate of the same order as~\eqref{eq:optimal-rate}. For fixed $L,D,\sigma^2$, as $N\rightarrow \infty$, the resulting guarantee differs from the minimax optimal rate only in the little-o term.

\begin{figure}
    \centering
    \includegraphics[width=\linewidth]{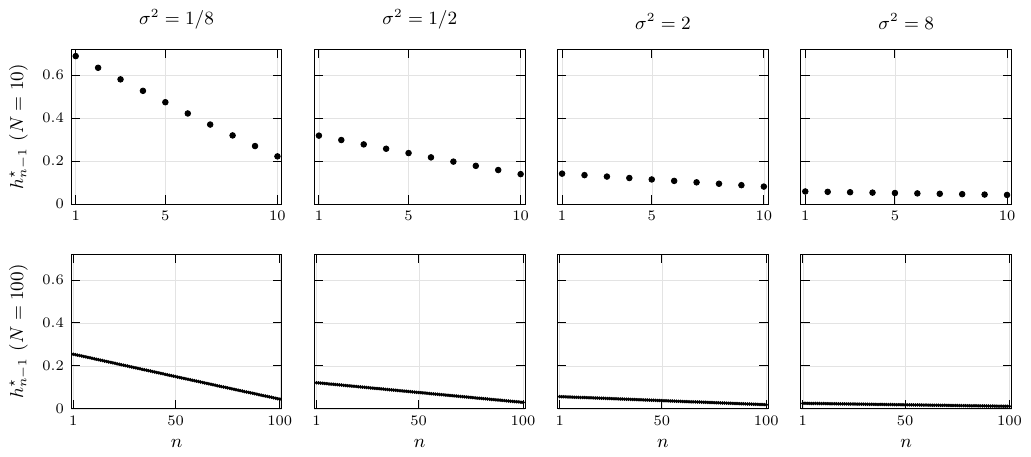}
    \caption{The exactly minimax optimal stepsize schedule $h^\star$ from~\eqref{eq:optimal-steps} for varied budgets $N$ and variances $\sigma^2$, fixing $L=D=1$. Asymptotically, this schedule behaves like the affine function~\eqref{eq:simple-asym-optimal-formula}.}
    \label{fig:stepsizes}
\end{figure}

\subsection{Related Work} \label{sec:related}

The worst-case performance of stochastic gradient descent methods has been studied widely~\cite{moulines2011nonasymptotic,shamir2013stochastic,jain2019making,liu2026revisiting}. Optimal last-iterate guarantees and linearly decaying stepsizes in the related setting of bounded stochastic subgradient methods were previously observed in~\cite{defazio2024lineardecay}. That work built upon the exactly optimal, linearly decaying stepsizes for the deterministic subgradient method of~\cite{zamani2025exact}.

Within the bounded-variance stochastic gradient descent model we consider, the results~\cite[Theorem~3.4 and Lemma~4.2]{liu2026revisiting} provided last-iterate guarantees when $h_{n-1} = a(N-n+1)/N^{3/2}$ with $0<a\leq \sqrt{N}/2$. They showed a convergence guarantee of
\begin{equation*}
    \mathbb{E}_{\epsilon}\left[f(x_N)\right]-f_\star
    \leq \frac{LD^2}{a\sqrt{N}}
    +\frac{4a\sigma^2}{L\sqrt{N}}.
\end{equation*}

Note that this bound is not tight, as applying Theorem~\ref{thm:main-rate} provides a constant factor improvement in this rate for all such $a$.
Provided $N\geq L^2D^2/\sigma^2$, their bound above is minimized by setting $a=LD/(2\sigma)$, where their optimized guarantee equals $4\sigma D/\sqrt{N}$. In contrast, our minimax optimal convergence guarantee~\eqref{eq:optimal-rate} improves this leading constant factor from $4$ to $1$ if the minimax optimal schedule above is used.

In deterministic smooth convex optimization, the series of works~\cite{dasgupta2024branchbound,grimmer2024provably,altschuler2025silver} identified faster worst-case convergence guarantees from nonmonotone stepsize schedules with long steps, including fractal schedules. Building on the recursive gluing technique of~\cite{altschuler2025silver}, the concurrently developed composition theories of~\cite{grimmer2025composing,zhang2026concatenation} provided equivalent constructions of schedules---called the Optimized Basic f-composable Schedules (OBS-F) in~\cite{grimmer2025composing}---conjectured to be exactly minimax optimal when $\sigma=0$. Our results here establish optimality and uniqueness for a simpler monotonic schedule when $\sigma^2 \geq \frac{2}{3\sqrt{3}}\frac{L^2D^2}{\sqrt{N}}$ and $d\geq N+1$. Thus, nonmonotone fractal stepsizes offer no improvement in worst-case guarantees once variance bounds are moderate.
Complementing this,~\cite{vernimmen2026robustness} studied the sensitivity of long-step schedules to relative gradient errors and proposed robust variants.

Exact analyses of stochastic first-order methods have been facilitated by the PEP techniques, first developed by~\cite{drori2014performance,taylor2017composite,taylor2017smooth}. For settings of finite sum minimization, where stochasticity arises from sampling indices, these tools can be applied by enumerating possible sampling outcomes. The works of~\cite{taylor2019stochastic,cortild2026biasoptimal} provided guarantees for stochastic gradient descent through computer-assisted potential and Lyapunov analyses based on such an approach. A more general approach to stochastic PEP modeling with relaxations based on expected interpolation conditions and noise moments was proposed by~\cite{rubbens2026computeraidedanalysesstochasticfirstorder}. Our results were informed first by numerical and then analytic solutions to their stochastic PEPs. Interpreting our upper-bound certificate and worst-case construction as matching dual and primal PEP solutions, respectively, establishes tightness of the associated PEP relaxation, whenever $h\in[0,1.5]^N$ and $d\geq N+1$.

Recent works~\cite{bai2025silver,bai2026twostep} have addressed the setting of SGD for stochastic smooth strongly convex minimization. Given an unbiased oracle with bounded variance and independent sampling, the work~\cite{bai2026twostep} characterized the expected squared distance to the minimizer after two steps of a proposed schedule generalizing the silver steps of~\cite{altschuler2025hedging}, finding improvement in the low noise regime.

Finally, we note that further improved guarantees are possible if one allows more general stochastic first-order methods. That is, one may allow averaging, extrapolation, and momentum dynamics beyond just designing algorithms via the selection of stepsizes. We refer readers to~\cite{lan2012}, where such methods are designed, recovering classic accelerated rates as $\sigma$ tends to zero. 

\section{Proof of Upper Bound for any $h\in [0,1.5]^N$} \label{sec:UB}

We prove the claimed upper bound on final expected suboptimality by showing that SGD maintains the nonnegativity of a certain inductive hypothesis $\mathcal H_n$ at each iteration. As a shorthand, denote the first-order values $f_i=f(x_i)$ and $g_i=\nabla f(x_i)$ for any $i\in\{0,\dots,N,\star\}$. Since the point $x_\star$ is a minimizer, $g_\star=0$.

For notational ease, we define the following quantities based on any given schedule of stepsizes
\begin{equation*}
R_n=\sum_{j=n}^{N-1}h_j,\qquad
\alpha_n=\frac{h_n}{2R_n},\qquad \alpha_N=1
\end{equation*}
for $n=0,\dots,N-1$. Throughout our upper bounding proof, we assume $h\in[0,1.5]^N$ but make no assumption on the dimension $d$ of the given problem instance. As an edge case, we set $\alpha_n=0$ when $n<N$ and $R_n=0$. Empty sums and products are zero and one, respectively.

The $L$-smoothness and convexity of $f$ ensure that for any pair of indices $i,j\in\{0,\dots,N,\star\}$ the following is nonnegative
\begin{equation*}
    f_i-f_j-\langle g_j,x_i-x_j\rangle-\tfrac{1}{2L}\|g_i-g_j\|^2 \geq 0.
\end{equation*}
This corresponds to the smooth convex interpolation condition~\cite[Theorem~4]{taylor2017smooth} between points $x_i$ and $x_j$.
Taking an expectation over realizations of the noise vectors $\epsilon$, the expected value of this quantity is also guaranteed to be nonnegative. We denote this quantity by
\begin{equation*}
\mathcal{Q}_{i,j} =\mathbb{E} \left[f_i-f_j-\langle g_j,x_i-x_j\rangle -\tfrac{1}{2L}\|g_i-g_j\|^2\right]\ge0.
\end{equation*}
Our convergence proof will make use of these inequalities with associated weights of
\begin{equation*}
\lambda_{\star,j}
=\alpha_j\prod_{m=0}^{j-1}(1-\alpha_m),\qquad
\lambda_{i,j}=\alpha_i\alpha_j
                 \prod_{m=i+1}^{j-1}(1-\alpha_m)\quad \forall 0\leq i<j\leq N.
\end{equation*}

While SGD runs, we define our inductive hypothesis $\mathcal{H}_n\geq 0$ in terms of the auxiliary vector
\begin{equation}\label{eq:inductive-y}
    y_n=\left[\prod_{m=0}^{n-1}(1-\alpha_m)\right]x_\star
      +\sum_{i=0}^{n-1}\alpha_i
        \prod_{m=i+1}^{n-1}(1-\alpha_m)(x_i-g_i/L).
\end{equation}
Equivalently, this auxiliary sequence can be recursively defined by $y_{n+1}=(1-\alpha_n)y_n + \alpha_n(x_n - g_n/L)$ with $y_0=x_\star$.
For each $n=0,\ldots,N$, we define our inductive hypothesis by the quantity
\begin{equation}\label{eq:inductive-H}
\begin{aligned}
\mathcal H_n= & \frac{LD^2}{2(1+2R_0)}
  +\frac{\sigma^2}{2L}\sum_{j=0}^{n-1}\frac{h_j^2}{1+2R_{j+1}} -\sum_{i=0}^{n-1}\alpha_i\prod_{m=i+1}^{n-1}(1-\alpha_m)
      \mathbb{E}\left[f_i-f_\star-\tfrac{1}{2L}\|g_i\|^2\right]
\\&-\frac{L}{2(1+2R_n)}
      \mathbb{E}\|x_n-y_n\|^2.
\end{aligned}
\end{equation}
The main work in our proof (Lemma~\ref{lem:inductive-step}) will establish nonnegativity of each successive $\mathcal{H}_{n}$ by writing it as a nonnegative combination of $\mathcal{H}_{n-1}$ and quantities assumed to be nonnegative.

Observing that the second line of~\eqref{eq:inductive-H} subtracts a squared term, the inequality $\mathcal{H}_n \geq 0$ implies an upper bound on a nonnegative weighted sum of objective gap-like\footnote{Noting that the descent lemma ensures a deterministic gradient descent step would find an objective value at most $f_i - \frac{1}{2L}\|g_i\|^2$, these terms are each nonnegative, expressing bounds on the objective gap one deterministic step ahead.} terms $\mathbb{E}\left[f_i-f_\star-\tfrac{1}{2L}\|g_i\|^2\right]$. Namely, one can directly read this induction as ensuring
\begin{equation*}
    \sum_{i=0}^{n-1}\alpha_i\prod_{m=i+1}^{n-1}(1-\alpha_m)
      \mathbb{E}\left[f_i-f_\star-\tfrac{1}{2L}\|g_i\|^2\right] \leq \frac{LD^2}{2(1+2R_0)}
  +\frac{\sigma^2}{2L}\sum_{j=0}^{n-1}\frac{h_j^2}{1+2R_{j+1}}.
\end{equation*}
Section~\ref{subsec:moreau-view} provides a more insightful interpretation of the hypothesis $\mathcal{H}_n \geq 0$. There we show that it provides a bound on suboptimality for an associated Moreau envelope at each iteration.

At $n=N$, the claimed sharp upper bound for SGD follows from this hypothesis and the identity
\begin{equation}\label{eq:inductive-terminal}
\begin{aligned}
\frac{LD^2}{2(1+2R_0)}
 +\frac{\sigma^2}{2L}\sum_{j=0}^{N-1}\frac{h_j^2}{1+2R_{j+1}}
 -\mathbb E[f_N-f_\star]
= &\mathcal H_N+\lambda_{\star,N}\mathcal Q_{\star,N} +\sum_{i=0}^{N-1}\lambda_{i,N}\mathcal Q_{i,N}
\\&+\tfrac{L}{2}\mathbb E\left\|
    x_N-y_N
       -\frac{g_N}{L}\right\|^2.
\end{aligned}
\end{equation}
Since each term of the right-hand-side is nonnegative, so is the left-hand-side, providing the claimed rate.
All that remains is to prove that SGD in fact maintains the inductive hypothesis $\mathcal{H}_n\geq 0$. This is done in the following pair of lemmas showing the base case and inductive step.
\begin{lemma}\label{lem:inductive-base}
The initial hypothesis satisfies
\begin{equation*}
\mathcal H_0=\frac{L(D^2-\|x_0-x_\star\|^2)}{2(1+2R_0)}\ge0.
\end{equation*}
\end{lemma}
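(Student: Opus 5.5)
This is a direct evaluation of the definition~\eqref{eq:inductive-H} at $n=0$, using the stated conventions that empty sums are zero and empty products are one.

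First, at $n=0$ both sums in~\eqref{eq:inductive-H} range over $j$ (respectively $i$) from $0$ to $-1$. They are therefore empty and vanish. This removes the noise term $\frac{\sigma^2}{2L}\sum_{j=0}^{-1}\cdots$ and the weighted objective-gap term. What remains is
\begin{equation*}
\mathcal H_0=\frac{LD^2}{2(1+2R_0)}-\frac{L}{2(1+2R_0)}\mathbb{E}\|x_0-y_0\|^2 .
\end{equation*}

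Second, I identify $y_0$. From~\eqref{eq:inductive-y} at $n=0$, the product $\prod_{m=0}^{-1}(1-\alpha_m)$ is empty and equals one, and the sum is empty. Hence $y_0=x_\star$, consistent with the recursive description $y_0=x_\star$. Both $x_0$ and $x_\star$ are deterministic, since no noise has yet been drawn. The expectation is therefore trivial, and $\mathbb{E}\|x_0-y_0\|^2=\|x_0-x_\star\|^2$. Combining the two terms over the common denominator gives the claimed identity $\mathcal H_0=\frac{L(D^2-\|x_0-x_\star\|^2)}{2(1+2R_0)}$.

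Finally, I check nonnegativity. Since $h\in[0,1.5]^N$, we have $R_0\ge 0$, so $1+2R_0\ge 1>0$. We also have $L>0$, and every problem instance in $\mathtt{P}(L,D,\sigma^2)$ satisfies $\|x_0-x_\star\|\le D$. So the numerator and the denominator are both nonnegative, and $\mathcal H_0\ge 0$.

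There is no real obstacle. The only care needed is applying the empty-sum and empty-product conventions consistently, and noting that $x_0$ is deterministic so the expectation drops out.
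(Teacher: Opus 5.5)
Your proof is correct and takes the same approach as the paper: the sums in the definition of $\mathcal H_n$ are empty at $n=0$, \eqref{eq:inductive-y} gives $y_0=x_\star$, and nonnegativity follows from $\|x_0-x_\star\|\le D$. You also state explicitly that $x_0$ is deterministic and that $1+2R_0\ge 1>0$, which the paper leaves implicit.
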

\begin{proof}
    Equation~\eqref{eq:inductive-y} gives $y_0=x_\star$. Both sums in~\eqref{eq:inductive-H} are empty at $n=0$, giving the
    stated formula. Nonnegativity follows from the assumed $\|x_0-x_\star\|\le D$.
\end{proof}

\begin{lemma}\label{lem:inductive-step}
For every $n=1,\ldots,N$, the following identity holds
\begin{equation}\label{eq:inductive-increment}
\begin{aligned}
\mathcal H_n={}&\mathcal H_{n-1}
 +\lambda_{\star,n-1}\mathcal Q_{\star,n-1}
 +\sum_{i=0}^{n-2}\lambda_{i,n-1}\mathcal Q_{i,n-1}\\
&+\frac{h_{n-1}^2}{2L(1+2R_n)}
       \left(\sigma^2-\mathbb E\|\epsilon_{n-1}\|^2\right)
\\&+\frac{L\alpha_{n-1}(2-h_{n-1}-\alpha_{n-1})}
          {2(1+2R_{n-1})(1+2R_n)}
       \mathbb E\left\|
          x_{n-1}-y_{n-1}
          -\frac{1+2R_{n-1}}{L}g_{n-1}\right\|^2.
\end{aligned}
\end{equation}
Since every added term above is nonnegative when $h\in [0,1.5]^N$,
$\mathcal H_{n-1}\geq0$ implies $\mathcal H_n\geq0$.
\end{lemma}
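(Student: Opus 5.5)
The identity in Lemma~\ref{lem:inductive-step} is a pure algebraic identity between expectations. I will verify it by reducing both sides to the same quadratic-plus-linear form in a small set of random quantities, and only at the end invoke the noise assumptions.

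\textbf{Reduction of $\mathcal H_n-\mathcal H_{n-1}$.} Write $x=x_{n-1}$, $y=y_{n-1}$, $g=g_{n-1}$, $\epsilon=\epsilon_{n-1}$, $h=h_{n-1}$, $\alpha=\alpha_{n-1}$. Use $x_n=x-\tfrac{h}{L}(g+\epsilon)$ and $y_n=(1-\alpha)y+\alpha(x-g/L)$, so that
\[
x_n-y_n=(1-\alpha)(x-y)-\tfrac{h-\alpha}{L}g-\tfrac{h}{L}\epsilon .
\]
Condition on $\mathcal F_{n-1}$. Here $x,y,g$ are measurable, $\mathbb E[\epsilon\mid\mathcal F_{n-1}]=0$, and the second moment of $\epsilon$ enters only through $\tfrac{h^2}{L^2}\mathbb E\|\epsilon\|^2$. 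The cross terms with $\epsilon$ therefore vanish, and $-\tfrac{L}{2(1+2R_n)}\mathbb E\|x_n-y_n\|^2$ produces $-\tfrac{h^2}{2L(1+2R_n)}\mathbb E\|\epsilon\|^2$. This combines with the new variance term $\tfrac{\sigma^2 h^2}{2L(1+2R_n)}$ in $\mathcal H_n$ to give exactly the $(\sigma^2-\mathbb E\|\epsilon\|^2)$ term of \eqref{eq:inductive-increment}. The weighted function-value sum in $\mathcal H_n$ equals $(1-\alpha)$ times the one in $\mathcal H_{n-1}$ plus $\alpha\,\mathbb E[f_{n-1}-f_\star-\tfrac1{2L}\|g\|^2]$.

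\textbf{Matching the function-value and inner-product terms.} On the right, expand $\lambda_{\star,n-1}\mathcal Q_{\star,n-1}+\sum_{i\le n-2}\lambda_{i,n-1}\mathcal Q_{i,n-1}$. The $f$-terms telescope, using $\lambda_{\star,n-1}+\sum_i\lambda_{i,n-1}=\alpha\bigl(\prod_{m<n-1}(1-\alpha_m)+\sum_i\alpha_i\prod_{m=i+1}^{n-2}(1-\alpha_m)\bigr)=\alpha$. They then match $-\alpha$ times the $\mathcal H_{n-1}$ weighted sum and the new $f_{n-1}$ term. The inner-product terms $\langle g, x_i-x\rangle$ and $\langle g,x_\star-x\rangle$ aggregate, by definition \eqref{eq:inductive-y} of $y$, into $\alpha\langle g,y-x\rangle$ plus contributions $-\tfrac{1}{L}\langle g,g_i\rangle$ weighted by $\alpha\,\alpha_i\prod(1-\alpha_m)$. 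The squared-norm terms $-\tfrac1{2L}\|g_i-g\|^2$ contribute $\|g_i\|^2$ pieces that cancel against the extra $(1-\alpha)$ versus $1$ discrepancy of the $\tfrac1{2L}\|g_i\|^2$ terms in the sums. What survives is a quadratic form in the two vectors $u=x-y$ and $g$ only.

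\textbf{Main obstacle: the residual quadratic in $(u,g)$.} The remaining step, which I expect to be the only delicate one, is to check that
\[
\tfrac{L}{2(1+2R_{n-1})}\|u\|^2-\tfrac{L}{2(1+2R_n)}\bigl\|(1-\alpha)u-\tfrac{h-\alpha}{L}g\bigr\|^2+(\text{collected }\langle g,u\rangle,\|g\|^2\text{ terms})
\]
equals $\tfrac{L\alpha(2-h-\alpha)}{2(1+2R_{n-1})(1+2R_n)}\|u-\tfrac{1+2R_{n-1}}{L}g\|^2$. I will match the three coefficients of $\|u\|^2,\langle u,g\rangle,\|g\|^2$. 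The key relations are $R_{n-1}=R_n+h$ and $\alpha=h/(2R_{n-1})$, which give $1-\alpha=\tfrac{2R_n+h}{2R_{n-1}}$. For the $\|u\|^2$ coefficient, for instance, one needs $\tfrac{1}{1+2R_{n-1}}-\tfrac{(1-\alpha)^2}{1+2R_n}=\tfrac{\alpha(2-h-\alpha)}{(1+2R_{n-1})(1+2R_n)}$, which reduces to a polynomial check in $R_n,h$. The case $n=N$ (with $\alpha_N$ irrelevant) and the degenerate case $R_{n-1}=0$, where $\alpha=0$ and everything collapses, I will handle separately.

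\textbf{Nonnegativity.} Each $\mathcal Q\ge0$ with $\lambda\ge0$, since $\alpha_m\in[0,1]$. The variance term is $\ge0$ by the variance bound. Finally, $2-h-\alpha\ge 2-1.5-\tfrac12\ge0$, because $\alpha=h/(2R_{n-1})\le\tfrac12$ as $R_{n-1}\ge h$. This is precisely where $h\in[0,1.5]^N$ is used.
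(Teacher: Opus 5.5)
Your proposal is correct and follows essentially the same route as the paper: split off the noise via the conditional zero-mean expansion of $x_n-y_n$, collapse the weighted interpolation terms using the definition of $y_{n-1}$ and $\lambda_{\star,n-1}+\sum_i\lambda_{i,n-1}=\alpha_{n-1}$, and verify the residual quadratic identity in $(x_{n-1}-y_{n-1},g_{n-1})$ by coefficient matching with $1+2R_n=1+2R_{n-1}-2h_{n-1}$. Nonnegativity then follows from $\alpha_{n-1}\le 1/2$ and $h_{n-1}\le 3/2$, exactly as in the paper. One bookkeeping fix: in your sketch the $\langle g_{n-1},y_{n-1}-x_{n-1}\rangle$ term and the $\langle g_{n-1},g_i\rangle$ terms carry inconsistent signs. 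Including the minus sign from $\mathcal Q$, the aggregated inner products equal $-\alpha_{n-1}\langle g_{n-1},y_{n-1}-x_{n-1}\rangle-\frac{\alpha_{n-1}}{L}\sum_i\alpha_i\prod_{m=i+1}^{n-2}(1-\alpha_m)\langle g_{n-1},g_i\rangle$. The $\langle g_{n-1},g_i\rangle$ pieces are cancelled by the cross terms of $-\frac{1}{2L}\|g_i-g_{n-1}\|^2$, which is why only a quadratic in $(u,g)$ survives.
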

\begin{proof}
    The definition of $y_n$ and the SGD update give
    \begin{equation}\label{eq:inductive-y-update}
    x_n-y_n=(1-\alpha_{n-1})(x_{n-1}-y_{n-1})
       -\frac{h_{n-1}-\alpha_{n-1}}{L}g_{n-1}
       -\frac{h_{n-1}}{L}\epsilon_{n-1}.
    \end{equation}
    The first two terms are measurable before $\epsilon_{n-1}$ is drawn.
    Conditional zero mean therefore gives
    \begin{equation}\label{eq:inductive-noise}
    \begin{aligned}
    \mathbb E\|x_n-y_n\|^2
       ={}&\mathbb E\left\|
       (1-\alpha_{n-1})(x_{n-1}-y_{n-1})
                  -\frac{h_{n-1}-\alpha_{n-1}}{L}g_{n-1}\right\|^2\\
       &+\frac{h_{n-1}^2}{L^2}\mathbb E\|\epsilon_{n-1}\|^2.
    \end{aligned}
    \end{equation}

    Since $\prod_{m=0}^{k-1}(1-\alpha_m)
    +\sum_{i=0}^{k-1}\alpha_i\prod_{m=i+1}^{k-1}(1-\alpha_m)=1$,
    expanding the interpolation terms using~\eqref{eq:inductive-y} yields,
    for every $k=0,\ldots,N$,
    \begin{align}
    \lambda_{\star,k}\mathcal Q_{\star,k}
      +\sum_{i=0}^{k-1}\lambda_{i,k}\mathcal Q_{i,k}
      =\alpha_k\mathbb{E}\bigg[
      &\sum_{i=0}^{k-1}\alpha_i\prod_{m=i+1}^{k-1}(1-\alpha_m)
        \left(f_i-f_\star-\tfrac{1}{2L}\|g_i\|^2\right)
        \nonumber\\
      &-(f_k-f_\star)
          +\langle g_k,x_k-y_k\rangle
          -\tfrac{1}{2L}\|g_k\|^2
       \bigg]. \label{eq:inductive-interpolation}
    \end{align}
    When subtracting~\eqref{eq:inductive-H} at successive indices, every
    old weight acquires a factor $1-\alpha_{n-1}$. Using
    \eqref{eq:inductive-noise} and~\eqref{eq:inductive-interpolation}
    with $k=n-1$,
    the claimed increment thus reduces to the single quadratic identity
    \begin{equation}\label{eq:inductive-square}
    \begin{aligned}
    &\frac{L}{2(1+2R_{n-1})}
       \|x_{n-1}-y_{n-1}\|^2\\
    &\quad-\frac{L}{2(1+2R_n)}\left\|
       (1-\alpha_{n-1})(x_{n-1}-y_{n-1})
         -\frac{h_{n-1}-\alpha_{n-1}}{L}g_{n-1}\right\|^2\\
    &\quad-\alpha_{n-1}
        \langle g_{n-1},x_{n-1}-y_{n-1}\rangle
        +\frac{\alpha_{n-1}}{L}\|g_{n-1}\|^2\\
    &\qquad=
     \frac{L\alpha_{n-1}(2-h_{n-1}-\alpha_{n-1})}
          {2(1+2R_{n-1})(1+2R_n)}
     \left\|
       x_{n-1}-y_{n-1}
       -\frac{1+2R_{n-1}}{L}g_{n-1}
     \right\|^2.
    \end{aligned}
    \end{equation}
    Expanding the two sides above proves this identity, using that
    $1+2R_n=1+2R_{n-1}-2h_{n-1}$ and
    $(1+2R_{n-1})\alpha_{n-1}=h_{n-1}+\alpha_{n-1}$.
    This proves the inductive identity~\eqref{eq:inductive-increment}.

    Finally, note that $0\le\alpha_{n-1}\le1/2$ and $h_{n-1}\le3/2$.
    These range bounds ensure the nonnegativity of
    $\alpha_{n-1}(2-h_{n-1}-\alpha_{n-1})\ge0$.
    All the other added terms are nonnegative by assumption, altogether
    establishing that the nonnegativity of $\mathcal{H}_n$ is
    inductively maintained.
\end{proof}

\subsection{Moreau Envelope Interpretation of Inductive Hypothesis $\mathcal{H}_n \geq 0$}\label{subsec:moreau-view}

As a final remark on this convergence proof, we note that one can interpret the inductive hypothesis in terms of Moreau envelopes. Namely, for any $\tau>0$, denote the associated Moreau envelope by
\begin{equation*}
    f_\tau(y) = \min_x \left\{f(x) + \frac{1}{2\tau}\|x-y\|^2\right\},
\end{equation*}
with the convention $f_0(y)=f(y)$.

Suppose $\mathcal{H}_n \geq 0$ and consider the values $\tau_n = 2R_n/L$, setting $\tau_N=0$.
Note that the weights in~\eqref{eq:inductive-y} for $y_n$ are nonnegative and sum to one. Applying the smooth convex interpolation condition between an arbitrary point $x$ and each of $x_\star,x_0,\dots,x_{n-1}$, taking the corresponding weighted sum, and completing the square gives
\begin{align*}
    f(x)-f_\star
    \leq{}&
    \sum_{i=0}^{n-1}\alpha_i
        \prod_{m=i+1}^{n-1}(1-\alpha_m)
        \left(f_i-f_\star-\tfrac{1}{2L}\|g_i\|^2\right) +\frac{L}{2}\|x-y_n\|^2\\
    &
    -\frac{L}{2}
        \left\|x-y_n-\frac{\nabla f(x)}{L}\right\|^2.
\end{align*}
Dropping the final nonpositive term provides a quadratic upper bound on $f$. Taking the Moreau envelope of this upper bound with parameter $\tau_n$ therefore gives
\begin{equation*}
    f_{\tau_n}(x_n)-f_\star
    \leq
    \sum_{i=0}^{n-1}\alpha_i
        \prod_{m=i+1}^{n-1}(1-\alpha_m)
        \left(f_i-f_\star-\tfrac{1}{2L}\|g_i\|^2\right)
    +\frac{L}{2(1+L\tau_n)}\|x_n-y_n\|^2.
\end{equation*}
This inequality also holds when $\tau_n=0$ by the stated convention.

With these values, the assumed nonnegativity of $\mathcal{H}_n$ ensures that
\begin{align*}
    \mathbb{E}\left[f_{\tau_n}(x_n)\right]-f_\star
    \leq{}&
    \sum_{i=0}^{n-1}\alpha_i
        \prod_{m=i+1}^{n-1}(1-\alpha_m)
        \mathbb{E}\left[
            f_i-f_\star-\tfrac{1}{2L}\|g_i\|^2
        \right] +\frac{L}{2(1+2R_n)}
        \mathbb{E}\|x_n-y_n\|^2\\
    ={}&
    \frac{LD^2}{2(1+2R_0)}
    +\frac{\sigma^2}{2L}
        \sum_{j=0}^{n-1}\frac{h_j^2}{1+2R_{j+1}}
    -\mathcal{H}_n\\
    \leq{}&
    \frac{LD^2}{2(1+2R_0)}
    +\frac{\sigma^2}{2L}
        \sum_{j=0}^{n-1}\frac{h_j^2}{1+2R_{j+1}}.
\end{align*}
Hence the inductive hypothesis maintains an upper bound on the expected suboptimality of the associated Moreau envelope at the current iterate.

Notice that as $n$ grows, the parameter $\tau_n$ decreases to zero, making the Moreau envelope $f_{\tau_n}$ approach the original objective $f$. At iteration $n=N$, one has $\tau_N=0$, making these actually equal. As a result, this induction ultimately gives the claimed upper bound for SGD as
\begin{equation*}
    \mathbb{E}[f_N]-f_\star
    =
    \mathbb{E}\left[f_{\tau_N}(x_N)\right]-f_\star
    \leq
    \frac{LD^2}{2(1+2R_0)}
    +\frac{\sigma^2}{2L}
        \sum_{j=0}^{N-1}\frac{h_j^2}{1+2R_{j+1}}.
\end{equation*}

\section{Proof of Lower Bound for any $h\in \mathbb{R}^N_+$} \label{sec:LB}

We construct our adversarial problem instance in dimension $d\geq N+1$ as follows:
Denote the $L$-smooth Huber function parameterized by a scalar $a\geq 0$ by
\begin{equation*}
    H_a(z) = \begin{cases}
        \frac{L}{2}|z|^2 & \text{if } |z|\leq a\\
        L a|z| - \frac{L}{2}a^2 & \text{otherwise}.
    \end{cases}
\end{equation*}
Note the degenerate case of $a=0$ has $H_0(z)=0$.
Let $e_0,e_1,\dots,e_N$ denote a collection of $N+1$ orthogonal unit vectors (which exist by the assumption that $d\geq N+1$). Define our constructed problem instance's initialization $x_0 = De_0$.
Define our noise distributions at each iteration $n=1,\dots,N$ by the mutually independent Rademacher distributions
\begin{equation} \label{eq:hard-distributions}
   \epsilon_{n-1} \sim P_{n-1}, \qquad \epsilon_{n-1} = \begin{cases}
        + \sigma e_n & \text{with probability }1/2\\
        - \sigma e_n & \text{with probability }1/2.
    \end{cases}
\end{equation}
Define our objective function by
\begin{equation} \label{eq:hard-function}
    f(x) = \sum_{i=0}^N H_{a_i}(\langle e_i, x\rangle)
\end{equation}
with $a_0 = \frac{D}{1+2 R_0}$ and $a_n = \frac{\sigma h_{n-1}}{
L\left(1+2R_n\right)
}$ for $n=1,\dots,N$.
The following lemma verifies this construction is indeed feasible for any choice of the parameters $L,D,\sigma^2 >0$.
\begin{lemma}
    The problem instance $\mathtt{p}$ defined by $x_0=De_0$, $f$ as in~\eqref{eq:hard-function}, and distributions $P_n$ as in~\eqref{eq:hard-distributions} is feasible, i.e., $\mathtt{p}\in\mathtt{P}(L,D,\sigma^2)$.
\end{lemma}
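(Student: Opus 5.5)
We have to check three things: $f$ is $L$-smooth and convex with an attained minimum, the initial distance is at most $D$, and the noise is conditionally zero-mean with conditional second moment at most $\sigma^2$. I would verify each in turn directly from the definitions.

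\emph{Smoothness and convexity.} First, each scalar Huber function $H_a$ with $a\ge 0$ is convex and differentiable. Its derivative is
\begin{equation*}
H_a'(z)=L\,\mathrm{clip}_{[-a,a]}(z)=L\max\{-a,\min\{a,z\}\}.
\end{equation*}
This map is nondecreasing and $L$-Lipschitz, because clipping onto an interval is a projection and hence nonexpansive. In the degenerate case $a=0$, $H_0\equiv 0$ and this still holds. Note that every $a_i\ge 0$, since $h\in\mathbb{R}^N_+$ gives $R_n\ge0$ and $1+2R_n\ge 1>0$.

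Next, $f(x)=\sum_i H_{a_i}(\langle e_i,x\rangle)$ has gradient
\begin{equation*}
\nabla f(x)=\sum_i H_{a_i}'(\langle e_i,x\rangle)e_i .
\end{equation*}
Since the $e_i$ are orthonormal,
\begin{equation*}
\|\nabla f(x)-\nabla f(x')\|^2=\sum_i |H_{a_i}'(\langle e_i,x\rangle)-H_{a_i}'(\langle e_i,x'\rangle)|^2\le L^2\sum_i|\langle e_i,x-x'\rangle|^2\le L^2\|x-x'\|^2,
\end{equation*}
where the last step is Bessel's inequality. Convexity holds because $f$ is a sum of convex functions composed with linear maps. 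Finally, $f\ge0$ and $f(0)=0$, so $x_\star=0$ is a minimizer and $f_\star=0$ is attained.

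\emph{Initial distance.} We have $\|x_0-x_\star\|=\|De_0\|=D$, so the bound $\|x_0-x_\star\|\le D$ holds with equality.

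\emph{Noise conditions.} Each $\epsilon_{n-1}$ is drawn independently of the history $\mathcal F_{n-1}$, so its conditional distribution equals its marginal. That marginal is $\pm\sigma e_n$ with equal probability. It therefore has conditional mean zero, and $\|\epsilon_{n-1}\|^2=\sigma^2$ almost surely, which gives the variance bound with equality.

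There is no genuine obstacle here. The only point needing care is the Lipschitz bound for $\nabla f$ in the case $d>N+1$, where $x$ has components outside $\mathrm{span}\{e_0,\dots,e_N\}$. Those components do not affect $f$, and Bessel's inequality handles them. The degenerate cases $a_i=0$, which occur for $h_{n-1}=0$ or $\sigma=0$, are harmless.
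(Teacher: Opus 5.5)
Your proposal is correct and follows the paper's proof essentially step for step. Both establish the minimizer at the origin with $f_\star=0$, the equality $\|x_0-x_\star\|=D$, the noise conditions read off from the Rademacher definition, and $L$-smoothness via the orthogonal decomposition of $\nabla f$ together with the per-coordinate Lipschitz bound on $H_a'$. Your additions (the clipping formula for $H_a'$, the check that every $a_i\geq 0$, and using independence to identify conditional with marginal noise distributions) only make explicit what the paper treats as routine.
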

\begin{proof}
    First, observe that the function $f$ is minimized at the origin $x_\star=0$, having $f_\star=0$. Then, one can verify the needed initial condition as $\|x_0-x_\star\|=\|De_0\|=D$. The needed conditions for each noise distribution $P_n$ of zero-mean and $\sigma^2$-bounded variance follow directly from its definition.
    
    All that remains is to verify that $f$ is convex and $L$-smooth. A simple calculation can verify that the Huber function $H_a$ is convex and $L$-smooth for any $a\geq 0$. Since $f$ is a sum of these Huber functions, it inherits their convexity. Moreover, applying the chain rule, the gradient of $f$ is given by
    \begin{equation*}
        \nabla f(x) = \sum_{i=0}^N H_{a_i}'(\langle e_i,x\rangle)e_i.
    \end{equation*}
    Then $L$-smoothness of $f$ follows from $L$-smoothness of each Huber function as
    \begin{align*}
        \|\nabla f(x)-\nabla f(y)\|^2 &= \|\sum_{i=0}^N (H_{a_i}'(\langle e_i,x\rangle) - H_{a_i}'(\langle e_i,y\rangle))e_i\|^2\\
        &=\sum_{i=0}^N |H_{a_i}'(\langle e_i,x\rangle) - H_{a_i}'(\langle e_i,y\rangle)|^2\\
        &\leq L^2\sum_{i=0}^N |\langle e_i, x-y\rangle|^2\\
        &\leq L^2 \|x-y\|^2.
    \end{align*}
\end{proof}

\subsection{Proof of Lower Bound Half of Theorem~\ref{thm:main-rate}}
    We show that, for any realization of SGD applied to our constructed problem instance, the final objective gap equals the claimed bound. Taking expectations then proves the result.

    Consider any sequence of signs $s_{n}\in\{\pm 1\}$ for $n=1,\dots,N$, providing a realization of errors from $P_{n-1}$ of $\epsilon_{n-1} = s_n \sigma e_n$. The key observation here is that the objective $f$ is separable over the coordinate directions $e_0,\dots,e_N$. So the progression of gradient descent in direction $e_i$ only depends on $\langle e_i, x_n\rangle$ (and no other directions). Thus we can analyze each direction separately.

    First consider the direction $e_0$. Each error $\epsilon_{n-1}$ is orthogonal to this direction, so SGD amounts to applying gradient descent deterministically to minimize $H_{a_0}(\langle e_0, x \rangle)$. This is a standard example in the worst-case analysis of deterministic gradient descent~\cite{drori2014performance,taylor2017smooth,grimmer2025composing}. We claim inductively that every iteration $n=0,\dots,N$ has $\langle e_0, x_n\rangle \geq a_0$. At $n=0$, this is immediate as $D\geq a_0$. Inductively, this follows at some $n\geq 1$ as all prior iterations have derivative in direction $e_0$ of $H'_{a_0}(\langle e_0, x_{i}\rangle) = La_0$. So
    \begin{equation*}
        \langle e_0, x_n\rangle = \langle e_0, x_0\rangle - \sum_{i=0}^{n-1} h_i a_0 = a_0\left(1+2\sum_{i=0}^{N-1} h_i\right) - a_0 \sum_{i=0}^{n-1} h_i = a_0(1+R_0+R_n) \geq a_0
    \end{equation*}
    by noting $\langle e_0, x_0\rangle =D= a_0\left(1+2\sum_{i=0}^{N-1} h_i\right)$ from our choices of $x_0$ and $a_0$.
    
    In particular, the above calculation shows $\langle e_0, x_N\rangle = a_0(1+R_0)$. Hence, the final objective value on this first Huber function component is
    \begin{equation*}
        H_{a_0}(\langle e_0,x_N\rangle) = La_0| a_0(1+R_0)| - \frac{L}{2}a_0^2 = \frac{L}{2}a_0^2(1+2R_0) = \frac{LD^2}{2(1+2R_0)}.
    \end{equation*}

    The same calculations can be repeated on each component $e_1,\dots,e_N$. There, iterations $n=1,\dots,i-1$ have no effect on the $i$th component as $\langle e_i,x_n\rangle=0$. Then iteration $n=i$ will set $\langle e_i,x_i\rangle = -s_{i}\sigma h_{i-1}/L = -s_ia_i(1+2R_i)$. Thereafter, each iteration $n=i+1,\dots,N$ will perform deterministic gradient descent in this coordinate direction. Just as calculated above, gradient descent will always remain on the same (positive or negative) facet of the Huber function $H_{a_i}$, in particular having $\langle e_i,x_n\rangle = -s_ia_i(1+R_i+R_n)$. Continuing with the same calculation, this ensures the final iterate has 
    \begin{equation*}
        H_{a_i}(\langle e_i,x_N\rangle) = \frac{\sigma^2}{2L}\frac{h_{i-1}^2}{1+2 R_{i}},
    \end{equation*}
    regardless of the sampled sign $s_i$. Summing over each coordinate $e_0,\dots,e_{N}$ shows the claimed lower bound is attained for every realization of the signs.

\subsection{Extension to Allow Negative Stepsizes in Lower Bound} \label{sec:negative-steps}
Finally, for any signed schedule $h$, consider the modified nonnegative schedule $h^+=(\max\{h_i,0\})_{i=0}^{N-1}$ and run both schedules with the same noise realization. Each nonconstant Huber component has the same initial or noise displacement. Thereafter, negative steps move away from zero, while positive steps translate both trajectories equally. Inductively, the iterates under $h$ remain on the same facet and at least as far from zero as under $h^+$. Noise at negative steps affects only components with $a_i=0$. Thus, $\mathtt{PEP}(h,L,D,\sigma^2)$ is at least the right-hand-side of~\eqref{eq:pep-value} evaluated at $h^+$.

\paragraph{Acknowledgments.} This work was supported in part by the Air Force Office of Scientific Research under grant FA9550261B093. Benjamin Grimmer was additionally supported as a fellow of the Alfred P. Sloan Foundation.
  
{\small
    \bibliographystyle{plain}
    \bibliography{bibliography}
}

\appendix

\section{Optimal Stepsize Schedule Verifications}

\subsection{Uniqueness and Optimality of Stepsize Schedule~\eqref{eq:optimal-steps}}
\label{app:optimal-stepsizes-verification}

We verify that the stepsizes in~\eqref{eq:optimal-steps} uniquely minimize the lower bound~\eqref{eq:minimized-LB} and attain~\eqref{eq:general-LB}.

First, observe that the objective differentiated in~\eqref{eq:stationarity-stepsizes} is convex: its first term is the reciprocal of a positive affine function, and each remaining term is a quadratic-over-linear function with positive denominator. In fact, the sum of the remaining terms is strictly convex. Equality in its convexity inequality forces the final stepsizes to agree, since its last summand is $h_{N-1}^2$. Working backwards, the denominator of each preceding summand is then fixed, forcing the corresponding stepsizes to agree as well. Thus any stationary point is the unique minimizer.

By the recurrence $c_{i+1}=c_i+c_i^2$, the expression $c_0\prod_{i=0}^{N-1}(1+2c_i)^2$ is continuous and strictly increasing in $c_0\geq0$, starting at zero and tending to infinity. Hence the prescribed choice of $c_0$ exists and is unique, and all of the stepsizes in~\eqref{eq:optimal-steps} are positive.

For the proposed schedule $h^\star$, summing~\eqref{eq:optimal-steps} gives
\begin{align*}
    1+2R_i
    =1+2\sum_{j=i}^{N-1}c_j
    \prod_{k=j+1}^{N-1}(1+2c_k) &=\prod_{k=i}^{N-1}(1+2c_k),
    \qquad \forall i=0,\dots,N.
\end{align*}
Consequently, at $h=h^\star$, one has
$h_i^\star/(1+2R_{i+1})=c_i$, and the defining equation for $c_0$ gives
$L^2D^2/(\sigma^2(1+2R_0)^2)=c_0$.
Differentiating the lower bound in each coordinate $i=0,\dots,N-1$ and evaluating at $h=h^\star$ therefore gives
\begin{align*}
    &\left.
    \frac{\partial}{\partial h_i}
    \left(
        \frac{LD^2}{2(1+2R_0)}
        +\frac{\sigma^2}{2L}\sum_{j=0}^{N-1}
        \frac{h_j^2}{1+2R_{j+1}}
    \right)\right|_{h=h^\star}\\
    &\qquad=
    -\frac{LD^2}{(1+2R_0)^2}
    +\frac{\sigma^2}{L}
    \left(
        \frac{h_i^\star}{1+2R_{i+1}}
        -\sum_{j=0}^{i-1}
        \frac{(h_j^\star)^2}{(1+2R_{j+1})^2}
    \right)\\
    &\qquad=
    \frac{\sigma^2}{L}
    \left(-c_0+c_i-\sum_{j=0}^{i-1}c_j^2\right)
    =0,
\end{align*}
where the last equality follows by summing $c_{j+1}-c_j=c_j^2$.
This verifies~\eqref{eq:stationarity-stepsizes} and establishes the claimed unique minimizer.

To compute its value, the same recurrence and
$1+2R_j=(1+2c_j)(1+2R_{j+1})$ give
\begin{align*}
    \sum_{j=0}^{N-1}\frac{(h_j^\star)^2}{1+2R_{j+1}}
    =\sum_{j=0}^{N-1}
    \left[c_j(1+2R_j)-c_{j+1}(1+2R_{j+1})\right] &=c_0(1+2R_0)-c_N.
\end{align*}
Hence, using the defining equation for $c_0$ once more, we arrive at the claimed rate as
\begin{align*}
    \frac{LD^2}{2(1+2R_0)}
    +\frac{\sigma^2}{2L}\sum_{j=0}^{N-1}
    \frac{(h_j^\star)^2}{1+2R_{j+1}}
    =\frac{LD^2}{1+2R_0}-\frac{\sigma^2}{2L}c_N &=\sigma D\sqrt{c_0}-\frac{\sigma^2}{2L}c_N.
\end{align*}

\subsection{Short Optimal Stepsizes Under Sufficient Variance Bound Condition}
\label{app:optimal-stepsizes-range}

We now verify that $h^\star\in[0,1.5]^N$ whenever
$\sigma^2\geq\frac{2}{3\sqrt{3}}\frac{L^2D^2}{\sqrt N}$.
First, observe that these stepsizes are decreasing. Indeed, for $i=0,\dots,N-2$,
\begin{equation*}
    \frac{h_i^\star}{h_{i+1}^\star}
    =\frac{c_i(1+2c_{i+1})}{c_{i+1}}
    =\frac{1+2c_{i+1}}{1+c_i}>1.
\end{equation*}
Thus it suffices to bound $h_0^\star$.

Since $c_i\geq c_0>0$, the defining equation for $c_0$ gives
\begin{equation*}
    \frac{L^2D^2}{\sigma^2}
    =c_0\prod_{i=0}^{N-1}(1+2c_i)^2
    \geq c_0\left(1+2\sum_{i=0}^{N-1}c_i\right)^2
    \geq4N^2c_0^3.
\end{equation*}
Applying this bound to the first stepsize, we find
\begin{equation*}
    h_0^\star
    =\frac{c_0}{1+2c_0}\prod_{i=0}^{N-1}(1+2c_i)
    =\frac{LD\sqrt{c_0}}{\sigma(1+2c_0)}
    \leq\left(\frac{L^4D^4}{2N\sigma^4}\right)^{1/3}
    \leq\frac32,
\end{equation*}
where the last inequality is precisely the assumed lower bound on $\sigma^2$.
Since every subsequent stepsize in the optimized schedule is positive and no larger than $h_0^\star$, this proves the claimed range.

When $d\geq N+1$, Theorem~\ref{thm:main-rate} then ensures that $h^\star$ attains the lower bound in~\eqref{eq:general-LB}.
Every alternative schedule has a strictly larger lower bound by the uniqueness proved above.
Hence $h^\star$ is also the unique minimax optimal schedule.

\section{Computer and AI Assistance}

The authors used numerical optimization and modern AI tools in preparing this manuscript. The presented worst-case instance and best-possible proof certificate were first identified numerically and then analytically by considering PEP solutions based on the framework of~\cite{rubbens2026computeraidedanalysesstochasticfirstorder}. In this first stage, ChatGPT was a useful assistant in verifying algebra and proposing analytic solutions. 
Once we identified a PEP proof of our main result, the second stage of our work again involved ChatGPT's assistance in producing the (equivalent) inductive proof presented in Section~\ref{sec:UB}. An explicit workflow of using LLMs to assist both in extracting analytic PEP solutions and in deriving associated inductive proofs was recently demonstrated by~\cite{kim2026domainspecificharnessendtoendautomation}.
All mathematical statements, proofs, references, and final editorial decisions in this manuscript are the responsibility of the authors.

\end{document}

%% file: preamble.tex
\usepackage[margin =1in]{geometry}
\usepackage{amssymb,amsthm,amsmath}
\usepackage{enumerate}
\usepackage{hyperref}
\usepackage{cite}
\usepackage[capitalize]{cleveref}
\usepackage[shortlabels]{enumitem}
\usepackage{color}
\usepackage{cancel}
 \usepackage{tikz}
\usepackage{tikz-3dplot}
\usepackage{pgf}
\usepackage{svg}
\usepackage{pgfplots}
\usepackage{import}
\usepackage{svg}
\usepackage{comment}
\usepackage[utf8]{inputenc} % allow utf-8 input
\usepackage[T1]{fontenc}    % use 8-bit T1 fonts
\usepackage{hyperref}       % hyperlinks
\usepackage{url}            % simple URL typesetting
\usepackage{booktabs}       % professional-quality tables
\usepackage{amsfonts}       % blackboard math symbols
\usepackage{nicefrac}       % compact symbols for 1/2, etc.
\usepackage{microtype}
\usepackage{multirow}
\usepackage{xfrac}
\usepackage{todonotes}
\usepackage{titlesec}
\usepackage{caption}
\usepackage{subcaption}

\usepackage{algorithm}
\usepackage{algpseudocode}
\usepackage{array}

\algtext*{EndFor}
\algtext*{EndIf}
\algtext*{EndWhile}
\algtext*{EndProcedure}
\algrenewtext{For}[1]{\textbf{for} #1}
\algrenewtext{While}[1]{\textbf{while} #1}
\algrenewtext{If}[1]{\textbf{if} #1}

\algnewcommand{\Input}{\item[\textbf{Input:}]}
\algnewcommand{\Initialize}{\item[\textbf{Initialize:}]}

\usepackage{varwidth}

\titleformat{\subsubsection}[runin]
{\normalfont\normalsize\bfseries}{\thesubsubsection}{1em}{}

\usepackage{graphicx}

\usepackage{appendix}
\numberwithin{equation}{section}
\theoremstyle{thmstyletwo}%
\newtheorem{thm}{Theorem}[section]
\newtheorem{theorem}[thm]{Theorem}

\newtheorem{lemma}[thm]{Lemma}

\newtheorem{corollary}[thm]{Corollary}

{\theoremstyle{plain}
}

\numberwithin{equation}{section}
\crefname{claim}{claim}{claims}
\Crefname{claim}{Claim}{Claims}
\crefname{lem}{lemma}{lemmas}
\Crefname{lem}{Lemma}{Lemmas}
\crefname{algorithm}{algorithm}{algorithms}
\Crefname{algorithm}{Algorithm}{Algorithms}

\usepackage{mathtools}

\usepackage[T1]{fontenc}
\usepackage{lmodern}
\newlength{\solutionindent}
\newlength{\questionboxrule}
\usepackage[dvipsnames]{xcolor}

\usepackage{hyperref}

\hypersetup{
  pdftitle={
    Exact Theory and Optimal Stepsizes for Stochastic Gradient Descent in Smooth Convex Minimization
  },
  pdfauthor={
    Benjamin Grimmer and Qining Shen
  },
  pdfsubject={
    A characterization of the worst-case performance of any short-step gradient descent method on stochastic smooth convex problems. When the variance bound is moderate, the exactly minimax optimal stepsizes are derived.
  },
  pdfkeywords={
    smooth convex optimization,
    stochastic gradient methods,
    fixed-step first-order methods,
    minimax optimality,
    optimal first-order methods,
    performance estimation problems,
    worst-case analysis,
    first-order oracle complexity
  },
  pdfcreator={LaTeX with hyperref},
  pdfdisplaydoctitle=true,
  unicode=true
}